\newtheorem{theorem}{Theorem}[section]
\newtheorem{lemma}[theorem]{Lemma}
\newtheorem{proposition}[theorem]{Proposition}
\newtheorem{definition}{Definition}[section]
\theoremstyle{definition}
\newtheorem{remark}[theorem]{Remark}
\newtheorem{problem}{Problem}
\def\N{\mathbb N}
\def\R{\mathbb R}
\def\Q{\mathbb Q}
\def\dis{\displaystyle}
\def\eps{\varepsilon}
\DeclareMathOperator*{\esssup}{ess\,sup}
\title{Lineability and modes of convergence}
\author {M.C.~Calder\'on-Moreno, P.J.~Gerlach-Mena, J.A.~Prado-Bassas}
\subjclass[2010]{Primary 28A20; Secondary 15A03, 40A05.}
\thanks{The authors
have been partially supported by the Plan Andaluz de Investigaci\'on de la Junta de Andaluc\'{\i}a FQM-127, Grant P08-FQM-03543, and by MICINN Grant PGC2018-098474-B-C21.}
\date{}
\begin{document}

\maketitle

\begin{abstract}
\noindent In this paper we look for the existence of large linear and algebraic structures of sequences of measurable functions with different modes of convergence. Concretely, the algebraic size of the family of sequences that are convergent in measure but not a.e.~pointwise, uniformly but not pointwise convergent, and uniformly convergent but not in $L^1$-norm, are analyzed. These findings extend and complement a number of earlier results by several authors.

\vskip .25cm

\noindent {\sc Key words and phrases:} lineability, algebrability, uniform convergence, convergence in measure, pointwise convergence, convergence in $L^1$-norm.

\end{abstract}

\bigskip

\begin{center}{ \em Dedicated to Professor Bernal-Gonz\'alez in occasion of his 60th birthday.}
\end{center}

\bigskip

\section{Introduction}

The notion of convergence is natural and clear in the scalar case or even in the finite dimensional setting. But when we speak about the limit of a sequence of functions, it is crucial to define what mode of convergence we are going to deal with. Every undergraduate student knows the notions of pointwise and uniform convergence of a sequence of functions, but if we endow our space of functions with a measure or even a norm, we can speak about convergence in measure, convergence almost everywhere, almost uniformly convergence, convergence in norm, etc. In this paper we investigate the differences between some of these modes of convergence in sequence spaces of measurable functions from the point of view of lineability.

The notion of lineability was first coined by Gurariy in 1966 (see \cite{Gurariy1}) in order to look for linearity into nonlinear settings. Later, in 2009, this notion was extended by Aron, Seoane, Gurariy {\em et al.}~to an algebraic frame (see \cite{topology,AGS}). Let us introduce the definitions of this new branch.

\begin{definition}
Given a topological vector space $X$, a subset $A \subset X$ and a cardinal number $\alpha$, we say that $A$ is:
\begin{itemize}
\item {\em lineable} if there is an infinite dimensional vector space $M$ such that $M \setminus \{0\} \subset A$,
\item {\em $\alpha$-lineable} if there is an a vector space $M$ with $\text{dim}(M) = \alpha$ such that $M \setminus \{0\} \subset A$ (hence lineability means $\aleph_0$-lineability, where $\aleph_0 = {\rm card}\,(\N )$ and $\N$ denotes the set of positive integers).
\item {\em maximal lineable} if $A$ is $\text{dim}(X)$-lineable.
\end{itemize}
Moreover, if $X$ is a topological vector space, then $A$ is said to be:
\begin{itemize}
\item {\em spaceable} in $X$ when there exists a closed infinite dimensional vector space $M$ such that $M \setminus \{0\} \subset A$;
\item {\em dense-lineable} in $X$ whenever there is a dense vector subspace $M$ with $M \setminus \{0\} \subset A$;
\item {\em $\alpha$-dense-lineable} in $X$ when, in addition, $\text{dim}(M)=\alpha$;
\item {\em maximal dense-lineable} in $X$ if we also have that $\text{dim}(M) = \text{dim}(X)$.
\end{itemize}
When $X$ is a topological vector space contained in some (linear) algebra, then $A$ is called:
\begin{itemize}
\item {\em algebrable} if there is an algebra $M$ so that $M \setminus \{0\} \subset A$ and $M$ is infinitely generated, that is, the cardinality of any system of generators of $M$ is infinite;
\item {\em $\alpha$-algebrable} if  there is an $\alpha$-generated algebra $M$ with $M \setminus \{0\} \subset A$.
\item {\em strongly $\alpha$-algebrable} if, in addition, the algebra $M$ can be taken free.
\end{itemize}
\end{definition}

If the algebraic structure of $X$ is commutative, the strong algebrability is equivalent to the existence of a generating set $B$ of the algebra $M$ such that, for any $s \in \N$, any nonzero polynomial $P$ in $s$ variables without constant term and any distinct $f_1,\ldots,f_s \in B$, we have $P(f_1, \ldots ,f_s) \in M\setminus\{0\}$.

\bigskip

There is a wide literature about lineability and we refer the reader to the survey \cite{BAMS} and the book \cite{aronbernalpellegrinoseoane} for a huge compendium of results in many situations. But, up to the knowledge of the authors, the study of lineability in function sequence spaces has not been extensively treated, probably because of the difficulty to define an appropriate topology to handle with.

The first result about lineability in sequence spaces goes back to 2014, when Bernal and Ord\'o\~nez \cite{bernalordonez2014} proved, as a consequence of a general result, the spaceability and maximal lineability of the family of sequences $(f_n:\R\to\R)_n$ of continuous bounded and integrable functions such that $\|f_n\|_\infty\overset{n\to\infty}{\longrightarrow}{0}$, $\sup\{\|f_n\|_{1}:\,n\in\N\}<+\infty$ but $\|f_n\|_{1}\not\to 0$ ($n\to\infty$). Here, as usual, $\R$ will denote the real line and for $g:\R\to\R$, $\|g\|_\infty$ will stand for the supremum of $|g|$ over $\R$ and $\|g\|_1$ for the integral of $|g|$ (respect to the classical Lebesgue measure) over $\R$.

Later, in 2017, Ara\'ujo, {\em et al.}~\cite{ABMPS-studia} showed the $\mathfrak{c}$-lineability (where $\mathfrak{c}$ denotes the cardinality of the continuum) of the family of sequences of Lebesgue measurable functions $\R\to\R$ such that $f_n$ converges pointwise to zero and $f_n(I)=\R$ for any non-degenerate interval $I\subset\R$ and any $n\in\N$, as well as the maximal dense-lineability (in the vector space of sequences of Lebesgue-measurable functions $[0,1]\to\R$) of the family of all sequences of Lebesgue-measurable functions such that $f_n\to0$ in measure but not almost everywhere in $[0,1]$.

In the probability theory setting, Conejero {\em et al.}~(see \cite{CoFeMuSe}) studied in 2017 some lineability and algebrability problems, as for example, convergent not $L^1$-unbounded martingales, pointwise convergent random variables whose means do no not converge to the expected value, stochastic processes that are $L^2$ bounded and convergent but not pointwise convergent in a null set.

Finally, in 2019, the authors \cite{CGP_JMAA} showed  that the family of sequences of unbounded continuous and integrable functions in $[0,+\infty)$ converging to zero uniformly in compacta and in $L^1$-norm is maximal dense-lineable and strongly algebrable. Moreover, they also prove that the uniform convergence in compacta of $[0,+\infty)$ can be strengthened to almost uniformly convergence, but not to uniform convergence in $[0,+\infty)$.

A number of results about Banach or quasi-Banach spaces of vector valued sequences, but not directly related to modes of convergence, can be found in \cite{BaBoFaPe,BoDiFaPe, BoFa, NoPel}.

In this paper we try to contribute to the study of the algebraic structure inside function sequence spaces. Concretely, in Section 2 we are interested in the algebraic structure of sequences converging to zero in measure but not almost everywhere (a.e.), thus extending results of Ara\'ujo {\em et al.}~\cite{ABMPS-studia}. Section 3 is devoted to analyze the difference between uniform, pointwise, uniformly a.e.~and almost uniform convergence; in particular we show the strong-$\mathfrak{c}$-algebrability, spaceability and maximal dense-lineability of the family of all sequences of functions converging to zero pointwise a.e.~and almost uniformly but not uniformly in $[0,1]$. Finally, in Section 4, the algebraic size of the set of sequences of functions that are uniformly convergent to zero in $[0,+\infty)$ but not in $L^1$-norm will be stated.

\section{Measure vs Pointwise a.e.~convergence}


Let $\mathcal{L}_0$ be the set of all measurable (with respect to the Lebesgue measure $m$) functions $[0,1] \to \R$. We denote by $L_0$ the vector space of all classes of functions in $\mathcal{L}_0$, where two functions are identified whenever they are equal a.e.~in $[0,1]$. In $L_0$, it is natural to consider the next two types of convergence: the (pointwise) a.e.~convergence and the convergence in measure. Recall that a sequence $(f_n)_n \subset L_0$ converges to $f$ a.e.~in $[0,1]$ if there is a measurable set $E$ such that $m(E)=0$ and $f_n \to f$ pointwise in $[0,1] \setminus E$ ($n \to \infty$); and $(f_n)_n \subset L_0$ converges to $f$ in measure if
$$\lim_{n\to \infty} m (\{ x \in [0,1]: \ |f_n(x)-f(x)| > \varepsilon \} )=0 \quad {\rm for} \ {\rm all} \ \varepsilon >0.$$

Using Egorov's Theorem (see, for instance, \cite[Theorem 8.3]{Oxtoby}), it is clear that convergence a.e.~in $[0,1]$ (in fact, in any finite-measure set) is stronger than convergence in measure, but they are not equivalent. For instance, the so-called ``Typewriter sequence'' given by
\begin{equation}\label{typewritter}T_n:= \chi_{[j 2^{-k}, (j+1) 2^{-k}]}\end{equation}
(where, for each $n$, the non-negative integers $j$ and $k$ are uniquely determined by $n=2^k+j$ and $0 \leq j < 2^k$, and $\chi_A$ denotes the characteristic function of $A$) satisfies that $T_n \to 0 $ ($n \to \infty$) in measure (moreover, $\lim_{n\to\infty}m(\{x\in[0,1]:\, T_n(x)\ne0\})=0$) but, for every point $x_0 \in [0,1]$ the sequence $(T_n(x_0) )_n$ does not converge, since it takes infinitely many times the value 0 and infinitely many times the value 1. However, the difference between both types of convergence is not so big. In fact, convergence in measure of a sequence $(f_n)_n$ to $f$ implies a.e.~convergence to $f$ of some subsequence (see, for instance, \cite[Theorem 21.9]{26-studia}).

In the following lemma we define a metric in $L_0$ that will be used later (see \cite{26-studia}).
\begin{lemma}\label{metrica} Let be $\rho : L_0 \times L_0 \to [0,+\infty)$  the metric given by
$$\rho(f,g) = \int_{[0,1]} \frac{ |f(x) - g(x)| }{ 1 + |f(x) - g(x)|} dx.$$
Then $\rho(f_n,f) \to 0$ if and only if $f_n \to f$ in measure.
\end{lemma}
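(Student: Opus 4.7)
The plan is to handle the two implications separately, exploiting the elementary behaviour of the auxiliary function $\varphi(t) = \frac{t}{1+t}$ on $[0,+\infty)$, which is increasing, takes values in $[0,1)$, and satisfies $\varphi(t)\to 0$ as $t\to 0^+$ and $\varphi(t)\to 1$ as $t\to\infty$. (Along the way one also uses the subadditivity $\varphi(a+b)\le\varphi(a)+\varphi(b)$, which is what makes $\rho$ a genuine metric; the integrand is bounded by $1$ so the integral always makes sense on $[0,1]$.)

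For the direct implication, assume $\rho(f_n,f)\to 0$. Fix $\varepsilon>0$ and set
$$A_n^\varepsilon := \{x\in[0,1]:\ |f_n(x)-f(x)|>\varepsilon\}.$$
Since $\varphi$ is increasing, on $A_n^\varepsilon$ the integrand in the definition of $\rho$ is bounded below by $\varphi(\varepsilon)=\frac{\varepsilon}{1+\varepsilon}$, so
$$\rho(f_n,f)\ \ge\ \int_{A_n^\varepsilon}\frac{|f_n-f|}{1+|f_n-f|}\,dx\ \ge\ \frac{\varepsilon}{1+\varepsilon}\,m(A_n^\varepsilon).$$
Rearranging gives $m(A_n^\varepsilon)\le \frac{1+\varepsilon}{\varepsilon}\rho(f_n,f)\to 0$, which is exactly convergence in measure.

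For the converse, assume $f_n\to f$ in measure and fix $\varepsilon>0$. Split $[0,1]=A_n^\varepsilon\cup B_n^\varepsilon$ with $A_n^\varepsilon$ as above and $B_n^\varepsilon$ its complement. On $B_n^\varepsilon$ one has $\frac{|f_n-f|}{1+|f_n-f|}\le \varphi(\varepsilon)\le\varepsilon$, while on $A_n^\varepsilon$ the integrand is bounded by $1$. Hence
$$\rho(f_n,f)\ \le\ m(A_n^\varepsilon)\ +\ \varepsilon\cdot m(B_n^\varepsilon)\ \le\ m(A_n^\varepsilon)\ +\ \varepsilon.$$
By hypothesis $m(A_n^\varepsilon)\to 0$, so $\limsup_n \rho(f_n,f)\le\varepsilon$, and since $\varepsilon>0$ was arbitrary, $\rho(f_n,f)\to 0$.

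Neither direction poses a real obstacle; the proof is essentially a one-line Chebyshev-type estimate in each direction. The only delicate point worth flagging is the uniform boundedness of the integrand by $1$, which both justifies the finiteness of $\rho$ on $L_0\times L_0$ (we are on the finite-measure space $[0,1]$) and supplies the trivial bound used on the bad set $A_n^\varepsilon$ in the converse direction.
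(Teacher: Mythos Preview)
Your proof is correct and is the standard argument for this classical fact. Note that the paper itself does not supply a proof of this lemma: it simply states the result and cites a reference (Nielsen's textbook on integration and measure theory), so there is no ``paper's own proof'' to compare against. Your two Chebyshev-type estimates are exactly what one finds in any standard treatment.
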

Observe that under the topology generated by $\rho$, the space $L_0$ turns out to be a separable complete metrizable topological vector space, and the space $L_0^\N$, endowed with the natural product metric
$$D((f_n)_n,(g_n)_n)=\sum_{n=1}^\infty \frac{1}{2^n}\cdot \frac{\rho(f_n,g_n)}{1+\rho(f_n,g_n)},$$
becomes also a complete metrizable separable topological vector space.
If we ask for the amount of sequences with the same behaviour than the typewriter sequence, we can say that, in 2017, the authors of \cite{ABMPS-studia} proved the maximal dense-lineability in $L_0^{\N}$ of the family of sequences $(f_n)_n \in L_0^{\N}$ such that $f_n \to 0$ in measure but $(f_n)_n$ does not converges a.e.~in $[0,1]$. In the next result we show that this family is also large in an algebraic sense. Let us represent each $N$-tuple $(r_1, \dots ,r_N) \in \R^N$ by ${\bf r}$, and set
$|{\bf r}| := r_1 + \cdots + r_N$ and ${\bf r} \cdot {\bf s} := r_1s_1 + \cdots + r_Ns_N$.

\begin{theorem}\label{Teo:1} The family of classes of sequences $(f_n)_n \in L_0^\N$ such that $f_n \to 0$ in measure but $(f_n)_n$ does not converge (to zero) a.e.~in $[0,1]$ is strongly $\mathfrak{c}$-algebrable.
\end{theorem}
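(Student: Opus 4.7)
The plan is to build a free algebra of dimension $\mathfrak{c}$ using the typewriter sequence $T_n$ of \eqref{typewritter} together with a Hamel basis of $\R$ over $\Q$. First I fix a Hamel basis $H\subset (0,+\infty)$ of $\R$ as a $\Q$-vector space, so that $|H|=\mathfrak{c}$, and for each $\alpha\in H$ I set
$$f_n^\alpha(x) := T_n(x)\,x^\alpha, \qquad x\in[0,1],\; n\in\N.$$
Each sequence $(f_n^\alpha)_n\in L_0^\N$ already belongs to the target family: $m(\mathrm{supp}(T_n))=2^{-k_n}\to 0$ forces $f_n^\alpha\to 0$ in measure, while for every $x\in(0,1]$ the scalar sequence $T_n(x)\,x^\alpha$ takes both the values $0$ and $x^\alpha>0$ infinitely often, so a.e.~convergence fails.

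Let $M$ be the subalgebra of $L_0^\N$ generated by $\{(f_n^\alpha)_n:\alpha\in H\}$. I would then verify that, for every $s\in\N$, pairwise distinct $\alpha_1,\ldots,\alpha_s\in H$, and every nonzero polynomial $P$ in $s$ variables without constant term, the sequence $(P(f_n^{\alpha_1},\ldots,f_n^{\alpha_s}))_n$ is a nonzero member of the target family. Writing $P(y_1,\ldots,y_s)=\sum_{\mathbf{k}} c_\mathbf{k}\, y_1^{k_1}\cdots y_s^{k_s}$ as a finite sum over $\mathbf{k}\in\N_0^s$ with $|\mathbf{k}|\ge 1$, and using the idempotency $T_n^{|\mathbf{k}|}=T_n$, one obtains
$$P(f_n^{\alpha_1}(x),\ldots,f_n^{\alpha_s}(x)) = T_n(x)\,G(x), \qquad G(x):=\sum_{\mathbf{k}} c_\mathbf{k}\,x^{\mathbf{k}\cdot\boldsymbol{\alpha}}.$$

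The key step is to show that $G\neq 0$ almost everywhere. Because $\alpha_1,\ldots,\alpha_s$ lie in a Hamel basis, they are $\Q$-linearly independent, so the exponents $\mathbf{k}\cdot\boldsymbol{\alpha}$ are pairwise distinct as $\mathbf{k}$ varies over the support of $P$. Hence $G$ is a finite nonzero $\R$-linear combination of monomials $x^\beta$ with distinct positive real exponents; such a combination extends to a real-analytic function on $(0,+\infty)$ that is not identically zero, so its zero set has Lebesgue measure zero. It follows that $T_n(x)G(x)$ is nonzero on a positive-measure subset of $\mathrm{supp}(T_n)$ (so the generators are algebraically free and every nonzero combination is nonzero in $L_0^\N$), vanishes outside $\mathrm{supp}(T_n)$ (so convergence to $0$ in measure persists), and oscillates between $0$ and the nonzero value $G(x)$ at almost every $x$ (so a.e.~convergence fails).

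The main obstacle I anticipate is the algebraic freeness of the generators: it hinges on the two ingredients above, namely the $\Q$-linear independence of the exponents (which forces the Hamel-basis choice and hence the cardinality $\mathfrak{c}$) and the idempotency $T_n^k=T_n$, which collapses every polynomial evaluation to the handy form $T_n\cdot G$ and unlocks the analytic-function argument that $G\neq 0$ a.e. The rest of the verification -- convergence in measure and failure of a.e.~convergence -- is then immediate from the behaviour of the typewriter sequence.
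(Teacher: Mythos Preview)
Your proof is correct and follows essentially the same strategy as the paper's: multiply the typewriter sequence by a family of functions indexed by a $\Q$-linearly independent set of cardinality $\mathfrak{c}$, then use the idempotency $T_n^k=T_n$ to collapse any polynomial combination to $T_n\cdot G$ with $G$ a nontrivial analytic function whose zero set is null. The only difference is cosmetic---the paper uses exponentials $e^{-cx}$ where you use powers $x^\alpha$---and the paper notes that its $G$ has only finitely many zeros rather than invoking real-analyticity, but the architecture of the argument is identical.
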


\begin{proof}
Let \,$H \subset (0,+\infty )$ be a linearly \,$\Q$-independent set (where $\Q$ will denote the set of all rational numbers) with
${\rm card} \, (H)= \mathfrak{c}$. For each \,$c \in H$, we define the sequence $F(c)=(F(c,n))_n$ by
\begin{equation*}F(c,n)(x):= e^{-cx} \cdot T_n (x),\end{equation*}
where $(T_n)_n$ is the Typewriter sequence defined in \eqref{typewritter}.

Let $\mathcal{B} $ be the algebra generated by the family of sequences $\{ F(c):\, c \in H\}$, that is, $\mathcal{B} $ is the family of all sequences $(F_n)_n$ for which there exist $N \in \N$, mutually different $c_1, \dots ,c_N \in H$ and a nonzero polynomial $P$ in $N$ real variables without constant term such that $F_n = P(F({c_1},n), \dots , F({c_N},n))$ for every $n\in \N$. Therefore, there exist a nonempty finite set $J \subset \N_0^N \setminus \{(0,\overset{(N)}{\ldots} ,0)\}$ and scalars $\alpha_{\bf j} \in \R \setminus \{0\}$, for ${\bf j}\in J$, such that, for all $x\in\R$,
\begin{eqnarray} \label{Eq:1}
F_n(x)&=&\dis 
\sum_{{\bf j} \in J} \alpha_{\bf j} \, F(c_1,n)(x)^{j_1} \cdots F(c_N,n)(x)^{j_N} \nonumber\\
&=&\dis \sum_{{\bf j}\in J} \alpha_{\bf j} e^{-({\bf c}\cdot{\bf j}) x}T_n(x) ^{|\bf j|} = \left(\sum_{{\bf j}\in J} \alpha_{\bf j} e^{-({\bf c}\cdot{\bf j}) x}\right)T_n(x),\end{eqnarray} where in the last equality the fact that $T_n(x)$ is an indicator function (so $T_n(x)^\beta=T_n(x)$ for any $\beta>0$)  is crucial.

Put $\varphi_{{\bf c},J}(x):= \sum_{{\bf j}\in J} \alpha_{\bf j} e^{-({\bf c}\cdot{\bf j}) x}$. Because $H$ is a $\Q$-linearly independent set, all numbers $\{{\bf c}\cdot {\bf j}:\, {\bf j}\in J\}$ are mutually distinct, so $\{e^{-({\bf c}\cdot{\bf j})x}:\, {\bf j}\in J\}$ is a linearly independent set of functions and, by \eqref{Eq:1}, $F_n$ is always non-null and the algebra ${\mathcal B}$ is a free algebra.

Moreover, the sequence $(F_n)_n=( \varphi_{{{\bf c},J}}(x) T_n(x))_n$ converges to zero in measure because, for any $\varepsilon>0$,
$$\{ x \in [0,1]: \, |\varphi_{{\bf c},J}(x) T_n(x)| > \varepsilon \} \subset \{ x \in [0,1] : \, T_n(x)\ne0\},$$ and the measures of the lasts sets go to zero when $n\to\infty$, as pointed out at the beginning of the section. Finally, as $(T_n)_n$ does not converge to $0$ pointwise a.e.~in $[0,1]$ and $\varphi_{{\bf c},J}(x)=0$ for finitely many points (specifically, at most ${\rm card}(J)-1$ points), \eqref{Eq:1} also gives us that any non-zero member of $\mathcal{B}$ does not converge to $0$ pointwise a.e.~in $[0,1]$.
%
\end{proof}

If we take into account the topological structure of $L_0^\N$, we can also extend the mentioned result by Ara\'ujo {\em et al.}~to get spaceability for the set of sequences of functions converging to zero in measure but not pointwise convergent a.e.~in $[0,1]$.

\begin{theorem}\label{spaceability1}The family of classes of sequences $(f_n)_n \in L_0^\N$ such that $f_n \to 0$ in measure but $(f_n)_n$ does not converge (to zero) a.e.~in $[0,1]$ is spaceable in $L_0^\N$.
\end{theorem}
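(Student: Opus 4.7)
The plan is to construct the desired closed infinite-dimensional subspace $M$ of $L_0^\N$ as the closed linear span of ``typewriter-modulated'' sequences built from a fixed partition of $[0,1]$. Fix pairwise disjoint measurable sets $\{E_k\}_{k\geq 1}$ of positive Lebesgue measure with $\bigcup_k E_k=[0,1]$ (for instance, $E_k=[1/(k+1),1/k)$), and for each $k$ define
$$F^{(k)} := (\chi_{E_k}\, T_n)_{n\in\N} \in L_0^\N,$$
where $(T_n)$ is the typewriter sequence of \eqref{typewritter}. Let $M_0:=\text{span}\{F^{(k)}:k\in\N\}$ and $M:=\overline{M_0}$ in $L_0^\N$; since the characteristic functions $\chi_{E_k}$ are linearly independent in $L_0$, $M$ is infinite-dimensional.

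The crux is to identify $M$ explicitly as
$$M=\{(g\, T_n)_{n\in\N} : g\in\Psi\},$$
where $\Psi\subset L_0$ denotes the closed linear span of $\{\chi_{E_k}:k\in\N\}$, i.e.\ the space of (classes of) measurable functions that are a.e.\ constant on each $E_k$. For the inclusion ``$\supseteq$'', approximate $g\in\Psi$ in measure by simple functions $g^{(m)}:=\sum_{k=1}^{K_m}\alpha_k^{(m)}\chi_{E_k}$ and observe that, for each fixed $n$, the map $h\mapsto h\,T_n$ from $L_0$ to $L_0$ is $\rho$-continuous, since $\{|(h_1-h_2)T_n|>\eps\}\subseteq\{|h_1-h_2|>\eps\}$; consequently the finite linear combinations $\sum_k\alpha_k^{(m)}F^{(k)}\in M_0$ converge coordinatewise in measure to $(g\,T_n)_n$, hence in $L_0^\N$. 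For ``$\subseteq$'', the fortunate fact $T_1\equiv 1$ on $[0,1]$ pins things down: if $(g^{(m)}T_n)_n\to (h_n)_n$ in $L_0^\N$ with $g^{(m)}\in\text{span}\{\chi_{E_k}\}$, then $g^{(m)}=g^{(m)}T_1\to h_1$ in measure, and a standard passage to an a.e.-convergent subsequence shows $\Psi$ is closed in $L_0$, so $h_1\in\Psi$; the continuity of multiplication by $T_n$ then forces $h_n=h_1\cdot T_n$ for every $n$.

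Once $M$ is described in this way, each nonzero element $(gT_n)_n\in M$ automatically lies in the target family. It converges to $0$ in measure because $\{x\in[0,1]:|g(x)T_n(x)|>\eps\}\subseteq\{x\in[0,1]:T_n(x)\ne 0\}$ and $m(\{T_n\ne 0\})\to 0$. On the other hand, $g\ne 0$ on a set $A$ of positive measure, and for every $x\in A$ the numerical sequence $(g(x)T_n(x))_n$ takes the nonzero value $g(x)$ infinitely often and the value $0$ infinitely often, so it fails to converge to $0$ at each $x\in A$; in particular $(gT_n)_n$ does not converge to $0$ a.e.\ in $[0,1]$.

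The main obstacle is the ``$\subseteq$'' direction of the description of $M$: without it, limits of combinations $\sum_k\alpha_k^{(m)}F^{(k)}$ could in principle escape the form $(gT_n)_n$ and introduce sequences that actually converge a.e., which would ruin the argument. The essential input that makes this step work is the equality $T_1\equiv 1$, which allows one to ``read off'' the multiplier $g$ as the limit of $g^{(m)}T_1$, thereby reducing the step to closedness of $\Psi$ in $L_0$.
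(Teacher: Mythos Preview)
Your proof is correct, but it follows a genuinely different route from the paper's. The paper partitions the \emph{index set} $\N$ into infinitely many disjoint subsequences $(i(k,n))_n$ and, for each $k$, keeps only those terms of the typewriter sequence whose index lies in the $k$-th subsequence (setting all other terms to $0$); the resulting generators $T(k)$ have the property that for each fixed $n$ at most one $T(k,n)$ is nonzero, which forces every element of the closed span to be a (possibly infinite) sum $\sum_\nu \lambda_\nu T(k_\nu)$ and makes the non-a.e.\ convergence readable at the indices where a single $T(k_\nu)$ is active. You instead partition the \emph{spatial domain} $[0,1]$ and multiply the full typewriter sequence by $\chi_{E_k}$; your key observation $T_1\equiv 1$ lets you recover the multiplier $g$ as the first coordinate of any limit, yielding the explicit description $M=\{(gT_n)_n:g\in\Psi\}$. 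Both arguments are constructive; yours is closer in spirit to the paper's algebrability proof (Theorem~\ref{Teo:1}), which also works by multiplying $T_n$ by functions of $x$, and the explicit identification of the closure is arguably cleaner than the paper's somewhat informal description of elements of $\overline{\mathrm{span}}\{T(k)\}$. One small remark: since you define $\Psi$ as a \emph{closed} span, its closedness is automatic and the a.e.-subsequence argument is only needed for the (inessential) identification of $\Psi$ with functions a.e.\ constant on each $E_k$.
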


\begin{proof} Firstly, let us divide $\N$ into infinitely many strictly increasing pairwise disjoint subsequences $\{(i(k,n))_n:\, k\in\N\}$ (for instance, $i(k,n)=\frac{k(k+1)}{2}+(n-1)k$). For each $k\in\N$, define the sequence $T(k)=(T(k,n))_n$ as follows: $$T(k,n)=\begin{cases}
  \chi_{[j 2^{-i(k,m)}, (j+1) 2^{-i(k,m)}]} & \text{if } n=j+2^{i(k,m)},\ 0\le j< 2^{i(k,m)}\\
  0 & \text{elsewhere.}
\end{cases} $$
Roughly speaking, for fixed $k\in\N$, we preserve every term of the Typewriter sequence where the support has length $2^{-i(k,m)}$ ($m\in\N$) and change the rest to be 0. Similarly as the Typewriter sequence, it is straightforward that every sequence $T(k)$ is convergent to zero in measure. Moreover, by construction, given any $x\in [0,1]$, there are infinitely many terms of $T(k)$ where the sequence takes the value 1 and infinitely many terms where it takes the value 0; so the sequence $(T(k,n)(x))_n$ cannot be convergent (to zero).

By construction, if $(k,n)\ne (k',n')$, then $T(k,n)$ and $T(k',n')$ can not be both nonzero. So if we take a linear combination $$\lambda_1T(k_1)+\ldots+\lambda_NT(k_N)$$ for any $1\le j_0\le N$ we always can find $n_{j_0}\in\N$ such that $T(k_{j_0},n_{j_0})$ is nonzero but $T(k_j,n_{{j_0}})=0$ for $j\ne j_0$. So, writing the linear combination at the $n_{j_0}$-coordinate, we get that $\lambda_{j_0}T(k_{j_0},{n_{j_0}})=0$, whence $\lambda_{j_0}=0$ and we get that the set $\{T(k):\, k\in\N\}$ is linearly independent.

Now let $M:=\overline{\rm span}\{T(k):\, k\in\N\}$. It is clear that $M$ is a closed infinite dimensional subspace of $L_0^\N$. In addition, it is straightforward that every member of $M$ is a sequence converging to zero in mesure (in fact,~using a standard topological argument it is easy to show that the whole vector space of sequences converging to zero in measure is closed in $L_0^\N$). 

It remains to prove the non pointwise convergence to zero. For that purpose, observe that
every nonzero member of $M$ is a finite or infinite linear combination of sequences $T(k)$, to be more precise, since the interiors of the supports of all functions in $\{T(k):\,k\in\N\}$ are pairwise disjoint, if $F\in M\setminus\{0\}$, there exists $J\subset\N$ and $\lambda_\nu\in \R\setminus\{0\}$ for every $\nu\in J$, such that $F=\sum_{\nu\in J}\lambda_j T(k_\nu)$. Fix $\nu_0\in J$ and let $J_0:=\{2^{i(k_{\nu_0},m)}+j:\, m\in\N,\, 0\le j< 2^{i(k_{\nu_0},m)}\}$. By construction, $F_n=\lambda_{\nu_0}T(k_{\nu_0},n) $ for every $n\in J_0$; hence, for fixed $x\in[0,1]$ there are infinitely many natural numbers $n$ (at least every number in $J_0$) such that $F_n(x)=\lambda_{\nu_0}\ne0$ and infinitely many natural numbers $n$ such that $F_n(x)=0$. Hence $(F_n(x))_n$ is never convergent (so $(F_n)$ s not a.e.~convergent) to zero.
\end{proof}

\section{Pointwise vs Uniformly convergence}

In this section we concentrate on pointwise convergence and uniform convergence in $[0,1]$. It is an easy exercise to find examples of sequences which converges to zero pointwise but not uniformly:~take, for instance, the sequence
\begin{equation}\label{ejemplo2}
   S_n := \chi_{[\frac{1}{n+1}, \frac{1}{n}]}.
\end{equation}
Observe that, in particular, $\bigcap_{n=1}^\infty \bigcup_{m=n}^\infty \left[ \frac{1}{m+1}, \frac{1}{m} \right] = \varnothing$. In fact, this property ``characterize'' in some sense those sequences of scalar multiples of indicators functions pointwise convergent to zero but not uniformly convergent. We include the proof to be self-contained. Recall that, for a sequence $(E_n)_n$ of sets, it is defined $$\limsup E_n:=\bigcap_{n\ge 1}\bigcup_{m\ge n}E_m.$$
\begin{proposition} Let $(X, \mathcal{M}, \mu)$ be a measure space, and $(\alpha_n)_n$ be a sequence of nonzero real numbers such that either $(\alpha_n)_n$ converges to $0$ or there exists $M>0$ such that $|\alpha_n|>M$ for $n$ large enough. Let $E_n\in{\mathcal M}\setminus\{\varnothing\}$ and $f_n = \alpha_n \chi_{E_n}$ $(n\in\N)$. Then:
\begin{enumerate}[{\rm (a)}]
\item\label{prop:a} $f_n\to 0$ pointwise on $X$ if and only if $\alpha_n \to 0$ or $\displaystyle \limsup E_n=\varnothing.$
\item\label{prop:0} $f_n\to 0$ a.e.~pointwise on $X$ if and only if $\alpha_n \to 0$ or $\displaystyle \mu\left(\limsup E_n\right)=~0.$
\item\label{prop:b} $f_n\to 0$ uniformly on $X$ if and only if $\alpha_n \to 0$.
\end{enumerate}
\end{proposition}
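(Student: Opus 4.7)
My plan is to observe that the three statements share a common mechanism: since each $E_n$ is nonempty, $\sup_{x\in X}|f_n(x)|=|\alpha_n|$, and for any fixed $x\in X$ the sequence $(f_n(x))_n$ equals $\alpha_n$ on indices $n$ with $x\in E_n$ and $0$ otherwise. The dichotomy hypothesis on $(\alpha_n)_n$ is what forces ``not converging to zero'' to mean ``eventually bounded away from zero'' with a uniform constant $M$, which is exactly what is needed for the converse implications.

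For \emph{(c)}, the implication is immediate in both directions: $\sup_{x\in X}|f_n(x)|=|\alpha_n|$ since $E_n\ne\varnothing$, so uniform convergence to zero is literally equivalent to $\alpha_n\to 0$. Note that the dichotomy hypothesis is not even needed here.

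For \emph{(a)}, I would argue both directions separately. If $\alpha_n\to 0$ then $|f_n(x)|\le|\alpha_n|\to 0$ for every $x$. If $\limsup E_n=\varnothing$ then every $x$ lies in only finitely many $E_n$, so $f_n(x)=0$ eventually. For the converse, suppose $f_n\to 0$ pointwise but $\alpha_n\not\to 0$; by the hypothesis there exist $M>0$ and $n_0$ with $|\alpha_n|>M$ for $n\ge n_0$. If there were some $x\in\limsup E_n$, then $x\in E_n$ for infinitely many $n\ge n_0$, giving $|f_n(x)|>M$ infinitely often, contradicting $f_n(x)\to 0$. Hence $\limsup E_n=\varnothing$.

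Part \emph{(b)} is handled with the same argument, replacing ``every $x$'' by ``every $x$ outside a null set''. For the forward implications: if $\alpha_n\to 0$ then $f_n\to 0$ everywhere, and if $\mu(\limsup E_n)=0$ then for every $x\notin\limsup E_n$ we have $f_n(x)=0$ eventually, so $f_n\to 0$ on the complement of a null set. Conversely, assuming $f_n\to 0$ a.e.\ and $\alpha_n\not\to 0$, the same contradiction argument as in (a) shows $\limsup E_n\subset\{x: f_n(x)\not\to 0\}$, and the latter has measure zero, giving $\mu(\limsup E_n)=0$. The only subtle point, which I expect to be the main (mild) obstacle to state cleanly, is emphasizing why the dichotomy on $(\alpha_n)_n$ is necessary for the converses in (a) and (b) — without it, one could imagine $|\alpha_n|$ dropping to values that vanish on $E_n$ without $\alpha_n$ itself tending to zero, breaking the contradiction.
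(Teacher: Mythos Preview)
Your proof is correct and follows essentially the same approach as the paper: the same bound $|f_n(x)|\le|\alpha_n|$ for the ``if'' directions, and the same contradiction via the dichotomy hypothesis for the converses in (a) and (b). Your treatment of (c) is in fact slightly cleaner than the paper's, since you note directly that $\sup_{x\in X}|f_n(x)|=|\alpha_n|$ (as $E_n\ne\varnothing$), which makes the equivalence immediate without invoking the dichotomy hypothesis at all; the paper instead argues (c) by contradiction using the dichotomy, which is unnecessary.
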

\begin{proof}\
\begin{enumerate}[(a)]
\item Suppose that $f_n \to 0$ pointwise on $X$ and $\displaystyle \limsup E_n\ne \varnothing$. Let $x_0 \in X$ such that for every $n\in\N$ there exists $m_n\in\N$ with $m_n\ge n$ and $x_0 \in E_{m_n}$. Then
    \begin{equation}\label{alpha}
      |\alpha_{m_n}|=|f_{m_n}(x_0)|\to0 \quad (n\to\infty),
    \end{equation}
     so $(\alpha_n)_n$ must converge to $0$.

Conversely, for any $x\in X$, $|f_n(x)|=|\alpha_n|\chi_{E_n}(x)\le |\alpha_n|$. So, if $\alpha_n\to0$, then $f_n(x)\to 0$. On the other hand, if $\limsup E_n=\varnothing$, then there is $n_0 \in \N $ such that $x \notin E_n$ for all $n\ge n_0$; hence, $f_n(x)=0$ for all $n \geq n_0$ and we are done.

\item Assume that $f_n\to0$ a.e.~in $X$ and $\mu\dis \left(\limsup E_n\right)>0$. Then, we always can find $x_0\in \limsup E_n$ such that $f_n(x_0)$ converges to zero and we can finish as in the previous part.

     For the reciprocal, following the proof of \eqref{prop:a}, it is straightforward that $f_n(x)=0$ for all $n$ large enough and all $x\notin \dis\limsup E_n$, which is a set of measure zero.

\item Suppose that $f_n\to0$ uniformly on $X$. Assume, by way of contradiction, that $\alpha_n\not\to 0$. Then there exist $M>0$ and $n_0 \in \N$ such that $|\alpha_n|>M>0$ for all $n\geq n_0$. But the uniform convergence of $f_n$ allows us to get $m\geq n_0$ such that $|f_n(x)|<\frac{M}{2}$ for all $x\in X$ and $n\ge m$. Therefore, $f_n= 0$ and $E_n=\varnothing$ for all $n\ge m$, which is impossible by hypothesis.

The reciprocal is immediate because of the fact that $|f_n(x)|\le |\alpha_n|$ for all $n\in\N$ and every $x\in X$.\end{enumerate} \end{proof}

\begin{remark}\label{remark:1}
  Observe that similarly as in part \eqref{prop:a} of the above proposition, it can be proved that, given any sequence $(\varphi_n)_n$ of measurable functions, if $\displaystyle \limsup E_n = \varnothing$, then the sequence $\varphi_n\cdot\chi_{E_n}\to0$ pointwise on $X$.
\end{remark}

The above proposition provides us with many sequences of measurable functions with pointwise but not uniform convergence. Let us see that the amount of these sequences in ${L}_0^{\N}$ is huge in both linear and algebraic senses. In fact, we deal with ``stronger'' types of convergence than the pointwise one.

Given $(f_n)_n \in \mathcal{L}_0^\N $ and $f \in \mathcal{L}_0$, we say that $f_n \to f$ almost uniformly on $[0,1]$ if for every $\varepsilon >0$ there exists a set $E\in [0,1]$ with $m(E) \leq \varepsilon$ such that $f_n \to f$ uniformly on $[0,1] \setminus E$; and $f_n \to f$ uniformly a.e.~if there is $E \subset [0,1]$ with $m(E)=0$ such that $f_n \to f $ uniformly on $[0,1] \setminus E$. Uniformly a.e.~convergence can be trivially adapted to $L_0^\N$, but almost uniformly convergence should be slightly adapted to classes of functions. Recall that given $f \in L_0$ and $A \subset [0,1]$, the essential supremum of $f$ at $A$ is defined by
$$ \esssup_{x\in A} f := \inf \{ \alpha \in \R : \ m(\{ x \in A: \ f(x) > \alpha\})=0\}.$$


\begin{definition}\label{defNUP}
A sequence of measurable functions $(f_n:[0,1]\to\R)_n$ is said to belong to the family $NUP([0,1])$, whenever it enjoys the next properties:
\begin{enumerate}[\rm (A)]
\item \label{NUP:A} $f_n \to 0$ pointwise a.e.~in $[0,1]$,
\item \label{NUP:B} for any $\varepsilon >0$ there is a measurable set $E \subset [0,1]$ such that $m(E)< \varepsilon$ and $\esssup_{[0,1] \setminus E} |f_n| \to 0$,
\item \label{NUP:C} $(f_n)_n$ does not converge uniformly a.e.~in $[0,1]$.
\end{enumerate}
\end{definition}

\begin{theorem}\label{PointUnifAlg}
The family $NUP([0,1])$ is strongly $\mathfrak{c}$-algebrable.
\end{theorem}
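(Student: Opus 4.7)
The plan is to mimic the construction from Theorem~\ref{Teo:1}, replacing the typewriter sequence with a template $(S_n)_n$ of characteristic functions that already lies in $NUP([0,1])$ and whose supports spread densely through $[0,1]$. I would take $S_n:=\chi_{A_n}$ with $A_n:=[j 2^{-k},\,j 2^{-k}+2^{-2k}]$ whenever $n=2^k+j$ with $0\le j<2^k$. Then $\sum_n m(A_n)=\sum_{k\ge 0}2^k\cdot 2^{-2k}=2<\infty$, so property~(A) holds by Borel--Cantelli (since $m(\limsup A_n)=0$); property~(B) holds by taking $E_\varepsilon:=\bigcup_{n\ge N}A_n$ for $N$ with $\sum_{n\ge N}m(A_n)<\varepsilon$, on whose complement $S_n$ vanishes for $n\ge N$; and property~(C) is immediate from $\esssup_{[0,1]\setminus E}|S_n|=1$ for every null set $E$ and every $n\in\N$. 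The key additional feature of this template is a \emph{sweeping property}: for every open interval $(a,b)\subset [0,1]$ and every $N\in\N$ there exist infinitely many $n\ge N$ with $A_n\subset (a,b)$, because any $(a,b)$ contains the dyadic interval $[j 2^{-k},(j+1)2^{-k}]\supset A_{2^k+j}$ for every sufficiently large $k$.

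With this template in hand, the algebraic setup is a verbatim translation of the proof of Theorem~\ref{Teo:1}. I would fix a $\Q$-linearly independent set $H\subset (0,+\infty)$ with ${\rm card}(H)=\mathfrak{c}$, define the generators $F(c,n)(x):=e^{-cx}S_n(x)$, and let $\mathcal{B}$ be the algebra they generate. Every nonzero $(F_n)_n\in\mathcal{B}$ then has the form $F_n=\varphi_{{\bf c},J}\cdot S_n$ with $\varphi_{{\bf c},J}(x)=\sum_{{\bf j}\in J}\alpha_{\bf j}e^{-({\bf c}\cdot{\bf j})x}$ a nontrivial finite linear combination of exponentials with pairwise distinct exponents; the freeness of $\mathcal{B}$ then follows exactly as in Theorem~\ref{Teo:1}. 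Properties~(A) and~(B) pass from $(S_n)$ to $(F_n)$ immediately because $\varphi_{{\bf c},J}$ is bounded on $[0,1]$.

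The substantive step is checking~(C) for $F_n=\varphi_{{\bf c},J}\cdot S_n$. Since $\varphi_{{\bf c},J}$ is a nontrivial exponential polynomial, it is real-analytic with only finitely many zeros on $[0,1]$, so I can fix $x_0\in[0,1]$ with $\varphi_{{\bf c},J}(x_0)\ne 0$; by continuity, $|\varphi_{{\bf c},J}|\ge|\varphi_{{\bf c},J}(x_0)|/2$ on some neighborhood $U$ of $x_0$. The sweeping property then yields infinitely many $n$ with $A_n\subset U$, and since $A_n\setminus E$ retains positive measure for any null~$E$, this gives $\esssup_{[0,1]\setminus E}|F_n|\ge|\varphi_{{\bf c},J}(x_0)|/2$ for infinitely many $n$. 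Hence $(F_n)$ cannot converge uniformly to zero on $[0,1]\setminus E$ for any null set~$E$, and since by~(A) any candidate uniform a.e.~limit would have to equal zero, property~(C) follows. The main obstacle throughout is the template design: the supports must shrink fast enough to be summable (to secure (A) and (B)) while spreading densely enough through $[0,1]$ that no fixed multiplier $\varphi_{{\bf c},J}$ can exploit a single accumulation-point zero to defeat the $\esssup$ lower bound needed for~(C). The dyadic construction above threads this needle.
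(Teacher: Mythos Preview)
Your proof is correct, but it follows a genuinely different route from the paper's.

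The paper keeps the simple template $S_n=\chi_{[1/(n+1),\,1/n]}$, whose supports accumulate only at the single point $0$, and compensates with an $n$-dependent multiplier $e^{-cn[(n+1)x-1]}$. The affine map $x\mapsto n[(n+1)x-1]$ sends $[1/(n+1),1/n]$ onto $[0,1]$, so for every $n$ one has
\[
\esssup_{[0,1]}|F_n|=\sup_{0\le w\le 1}\Bigl|\sum_{\mathbf{j}\in J}\alpha_{\mathbf{j}}e^{-(\mathbf{c}\cdot\mathbf{j})w}\Bigr|,
\]
a positive constant independent of $n$; this delivers~(C) in one stroke, while (A) and~(B) are immediate from $\limsup[1/(n+1),1/n]=\varnothing$ without Borel--Cantelli.

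You instead keep the simple $n$-independent multiplier $e^{-cx}$ of Theorem~\ref{Teo:1} and push all the work into the template: your dyadic supports $A_n$ have summable measures (giving (A) via Borel--Cantelli and~(B) via tails) yet sweep through every subinterval of $[0,1]$ infinitely often, so no zero of $\varphi_{\mathbf{c},J}$ can drive $\esssup|F_n|$ to $0$. This is exactly the obstruction you identify at the end: had you combined $e^{-cx}$ with the paper's $S_n$, the polynomial $P(y_1,y_2)=y_1-y_2$ would yield $\varphi_{\mathbf{c},J}(0)=0$ and hence $\esssup|F_n|\to 0$, destroying~(C). The paper's rescaling trick and your sweeping template are two different ways around the same difficulty; yours buys a verbatim transfer of the algebraic part from Theorem~\ref{Teo:1}, while the paper's buys a simpler verification of (A)--(C) for the template and a one-line proof of~(C) for the whole algebra.
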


\begin{proof} As in the proof of Theorem \ref{Teo:1}, let $H\subset(0,+\infty)$ a $\Q$-linearly independent set with ${\rm card}(H)=\mathfrak{c}$. For each $c \in H$ we define the sequence $F(c)=(F(c,n))_n$ by
$$ F(c,n) (x):= e^{-cn[(n+1)x-1]} \cdot S_n (x),$$
where $(S_n)_n$ is the sequence defined in \eqref{ejemplo2}.

Let $\mathcal{B} $ be the algebra generated by the family of sequences $\{ F(c) : \, c \in H\}$. Now, because each $S_n$ is an indicator function, any nonzero member $(F_n)_n$ of $\mathcal{B}$ is of the form \begin{equation}\label{eq:2}F_n(x)=\left(\sum_{{\bf j}\in J}\alpha_{\bf j}e^{-({\bf c}\cdot{\bf j})n[(n+1)x-1]}\right) S_n(x),\end{equation} where, for some $N\in\N$, $J \subset \N_0^N \setminus \{(0,0, \dots ,0)\}$ is a nonempty finite set, $\alpha_{\bf j} \in \R \setminus \{0\}$ for ${\bf j}\in J$ and ${\bf c} \in H^N$.

From \eqref{eq:2}, Remark \ref{remark:1} and the definition of $S_n(x)$, it follows that $F_n(x)$ is pointwise convergent to zero. Moreover, condition \eqref{NUP:B} of Definition \ref{defNUP} also holds because, for every $\eps>0$, $F_n(x)=S_n(x)=0$ for any $n>1/\eps$ and $x\in(\eps,1]$.

Finally, observe that for fixed $n\in\N$, we have that $x\in\left[\frac{1}{n+1},\frac{1}{n}\right]$ if and only if $w:=n[(n+1)x-1]\in [0,1]$. From this fact, together with the definition of $S_n(x)$, we have that
\begin{eqnarray*}
  \dis\esssup_{0\le x\le 1}|F_n(x)| &=& \sup_{\frac{1}{n+1}\le x\le \frac{1}{n}} |F_n(x)| = \sup_{\frac{1}{n+1}\le x\le \frac{1}{n}} \left| \sum_{{\bf j}\in J}\alpha_{\bf j}e^{-({\bf c}\cdot{\bf j})n[(n+1)x-1]}\right|\\
  &=& \dis \sup_{0\le w\le 1}\left|\sum_{{\bf j}\in J}\alpha_{\bf j}e^{-({\bf c}\cdot{\bf j})w}\right|.
\end{eqnarray*}
But this last amount does not depend on $n$ and, in addition, it is positive, because the $\Q$-linearly independence of $H$ implies the linear independence of the set $\{e^{-({\bf c}\cdot{\bf j})x} :\, c\in H,\, {\bf j}\in J\}$. Thus, $(F_n)_n$ does not converge (to zero) uniformly a.e.~in $[0,1]$ and the proof is finished.
\end{proof}

As pointed out in the previous section, Egorov's Theorem guarantees that every member of the algebra $\mathcal{B}$ of the proof of the above theorem, including the sequence $(S_n)_n$ defined in \eqref{ejemplo2}, is also convergent to zero in measure.
So, it is natural to ask about the spaceability of the family of sequences of Theorem \ref{PointUnifAlg}, when $L_0^\N$ is endowed with the product topology inherited from the convergence in measure and its metric defined in Lemma \ref{metrica}.

\begin{theorem}\label{spaceability2}
  The family $NUP([0,1])$ is spaceable in $L_0^\N$.
\end{theorem}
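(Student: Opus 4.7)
The plan is to mimic the construction used in Theorem \ref{spaceability1}, replacing the Typewriter sequence by the sequence $(S_n)_n$ of \eqref{ejemplo2} as our building block. First I would fix a partition of $\N$ into infinitely many strictly increasing pairwise disjoint subsequences $\{(i(k,n))_n:\, k\in\N\}$ and, for each $k\in\N$, define the sequence $T(k)=(T(k,n))_n$ by
$$
T(k,n) := \begin{cases} S_n & \text{if } n = i(k,m) \text{ for some } m \in \N, \\ 0 & \text{otherwise}; \end{cases}
$$
that is, $T(k)$ is obtained from $(S_n)_n$ by retaining only the terms whose index lies in the $k$-th subsequence and setting the others to $0$.

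Next I would check that each $T(k)$ already belongs to $NUP([0,1])$ and that the family $\{T(k):\, k\in\N\}$ is linearly independent. Indeed, for every fixed $n$, at most one value of $k$ makes $T(k,n)$ nonzero, so evaluating a finite linear combination $\sum_{j=1}^N \lambda_j T(k_j)$ at a suitable index $n=i(k_{j_0},1)$ isolates the term $\lambda_{j_0} S_n$ and forces $\lambda_{j_0}=0$. Condition \eqref{NUP:A} for $T(k)$ is inherited from $(S_n)_n$, since each $x\in[0,1]$ belongs to at most finitely many intervals $[1/(n+1),1/n]$; condition \eqref{NUP:B} follows because $S_n\equiv0$ on $(\varepsilon,1]$ for $n>1/\varepsilon$; and \eqref{NUP:C} holds because $\esssup_{[0,1]\setminus N} T(k,n) = 1$ for every null set $N$ on the retained subsequence.

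Then I would set $M := \overline{\mathrm{span}}\{T(k):\, k\in\N\}$ in $L_0^\N$, which is automatically a closed infinite-dimensional vector subspace. The crucial step is to obtain a structural description of the elements of $M$: exactly as in the proof of Theorem \ref{spaceability1}, the pairwise disjointness across $k$ of the coordinatewise supports $\{(n,x): T(k,n)(x)\neq0\}$ combined with a coordinatewise Cauchy argument in the product metric forces every $F\in M\setminus\{0\}$ to satisfy $F_n=\lambda_k S_n$ whenever $n=i(k,m)$ for some $m$, and $F_n=0$ otherwise, for a uniquely determined family $(\lambda_k)_{k\in\N}\subset\R$ not identically zero. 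The point is that a Cauchy sequence of finite combinations $F^{(p)}=\sum_k \lambda_k^{(p)} T(k)$, read at any fixed coordinate $n=i(k,m)$, is simply $(\lambda_k^{(p)} S_n)_p$, which converges in $L_0$ if and only if $(\lambda_k^{(p)})_p$ converges in $\R$.

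Finally, for such $F$, properties \eqref{NUP:A} and \eqref{NUP:B} follow exactly as for the generators, via the same two one-line checks. For \eqref{NUP:C}, I would pick $k_0$ with $\lambda_{k_0}\neq0$ and observe that along the infinite subsequence $n=i(k_0,m)$ we have $\esssup_{[0,1]\setminus N}|F_n|=|\lambda_{k_0}|>0$ for every null set $N\subset[0,1]$, which rules out uniform a.e.~convergence. I expect the main obstacle to be the rigorous identification of $M$ through the coordinatewise Cauchy argument; once this structural description is in hand, the three $NUP$ conditions reduce to the same verifications already made for the generators $T(k)$.
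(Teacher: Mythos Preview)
Your proposal is correct, but it follows a genuinely different construction from the paper's own proof. You \emph{thin out} the sequence $(S_n)_n$ along the $k$-th subsequence of indices, so that at each coordinate $n$ at most one generator $T(k)$ is nonzero; this is precisely the template of Theorem~\ref{spaceability1} transplanted to the new building block. The paper instead \emph{reparametrizes the supports}: it sets $S(k)=(\chi_{E_{i(k,n)}})_n$, so that every generator is nonzero at every coordinate, and disjointness occurs spatially in $[0,1]$ (for fixed $n$ the intervals $E_{i(k,n)}$, $k\in\N$, are essentially pairwise disjoint). Consequently, in the paper an element of the closure is a genuine infinite series $F=\sum_j\alpha_j S(k_j)$ whose $n$-th coordinate is a sum of characteristic functions with disjoint supports, whereas in your approach the $n$-th coordinate of any $F\in M$ is simply a scalar multiple $\lambda_{k(n)}S_n$. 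Your structural description is therefore a bit more transparent and the coordinatewise Cauchy argument you outline is exactly what is needed; note that the product metric gives no uniform control on $(\lambda_k)_k$, but this is harmless since conditions \eqref{NUP:A} and \eqref{NUP:B} hold because each $x\in(0,1]$ lies in only finitely many $E_n$, regardless of the size of the coefficients. The paper's route, on the other hand, avoids having ``dead'' coordinates in the generators and makes the verification of \eqref{NUP:C} marginally more direct, at the cost of a slightly heavier description of the closure.
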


\begin{proof}
Let $E_n:=\left[\frac{1}{n+1},\frac{1}{n}\right]$. Let us divide $\N$ into infinitely many pairwise disjoint subsequences $\{(i(k,n))_n:\, k\in\N\}$, such that $i(k,n)< i(k',n)$ for $k< k'$ (again, as in the proof of Theorem \ref{spaceability1}, $i(k,n):=k(k+1)/2+(n-1)k$ do the job). For each $k\in\N$, define the sequence $S(k)=(S(k,n))_n:=(\chi_{E_{i(k,n)}})_n$.

First of all, we are going to prove that $\{S(k):\ k\in\N\}$ is a linearly independent set. Indeed, let $\lambda_1,\ldots,\lambda_N\in\R$ and pairwise different $k_1,\ldots,k_N\in\N$ such that $\sum_{j=1}^N\lambda_j S(k_j)$ is the null sequence. Then, for every $n\in\N$ and every $x\in[0,1]$, we have \begin{equation}\label{independence}\lambda_1\chi_{E_{i(k_N,n)}}(x)+\ldots+\lambda_N\chi_{E_{i(k_N,n)}}(x)=0.\end{equation}
But, by construction, if $(k,n)\ne(k',n')$ then $i(k,n)\ne i(k',n')$, so $E_{i(k,n)}\cap E_{i(k',n')}$ is either empty or a singleton. Then, for $1\le j\le N$ we always can find $$x_j\in  E_{i(k_j,n)}\setminus \bigcap_{\substack{1\le \nu \le N\\ \nu\ne j}} E_{i(k_{\nu},n)}$$ and applying \eqref{independence} at $x=x_j$, we get that $\lambda_j=0$ for $1\le j\le N$.

Let $M:=\overline{\rm span}\{S(k):\ k\in\N\}$. It is clear that $M$ is an infinite dimensional closed subspace of $L_0^\N$. We claim that every nonzero member of $M$ enjoys properties \eqref{NUP:A}, \eqref{NUP:B} and \eqref{NUP:C} of Definition \ref{defNUP}.

Given $F=(F_n)_n\in M\setminus\{0\}$, there exists a strictly increasing sequence $(k_j)_j\subset\N$ and a sequence $(\alpha_j)_j\subset\R$ (not identically zero), such that $F=\sum_{j=1}^\infty \alpha_jS(k_j)$ and, without loss of generality, we may assume that $\alpha_1\ne0$.

For every $j,n\in\N$, it is clear that $0<\frac{1}{i(k_j,n)}<\frac{1}{i(k_1,n)}\to0$ ($n\to\infty$). So, for any $x\in[0,1]$, there is a number $n_0\in\N$ such that $F_n(x)=0$ for all $n\ge n_0$. Hence, $F_n$ is convergent to 0 in $[0,1]$ and we have \eqref{NUP:A}. Moreover, given $\eps>0$ there is $n_1\in\N$ such that $E_{i(k_j,n)}\subset[0,\eps/2]$ for every $n\ge n_1$ and every $j\in\N$. Hence, $F_n(x)=0$ for every $n>n_1$ and every $x\in(\eps/2,1]$, and we get \eqref{NUP:B}.

Finally, \eqref{NUP:C} holds because, for every $n$ of the form $2^{i(k_1,m)}+j$ for some $m\in\N$ and $0\le j<2^{i(k_1,m)}$, we have that
$$\esssup_{0\le x\le 1}|F_n(x)|\ge \esssup_{x\in E_{i(k_1,m)}} |F_n(x)| = \sup_{x\in E_{i(k_1,m)}} |\alpha_1 S(k_1,n)(x)|=|\alpha_1|\ne0.$$
\end{proof}

\begin{remark}\label{Remark:KandT}
 Observe that, following \cite{bernalordonez2014}, the above theorem (and Theorem \ref{spaceability1}) can also be proved by using a general result on spaceability of Kitson and Timoney (see \cite[Theorem 2.2]{KitsonTimoney} or \cite[\S7.4]{aronbernalpellegrinoseoane}):
 \begin{quotation}{\em
   If $Y$ is a closed vector subspace of a Fr\'echet space $X$, then $X\setminus Y$ is spaceable if and only if $Y$ has infinite codimension.}
 \end{quotation}
    But this technique, at least in the above results, does not shorten the proofs and make them less constructive.
\end{remark}

It is a direct consequence of both Theorems \ref{PointUnifAlg} and \ref{spaceability2} that the family $NUP([0,1])$ is $\mathfrak{c}$-lineable. But again bringing up the topological structure of $L_0^\N$, and taking into account that any complete separable metric topological vector space has dimension at most $\mathfrak{c}$, we can prove also the maximal dense-lineability of this family.

Prior to this, we need an auxiliary, general result about lineability. The following result, being one of the many existing variants, can be found in \cite[Theorem 2.3]{bernalordonez2014} (see also \cite[Theorem 2.2 and Remark 2.5]{topology} and \cite[Section 7.3]{aronbernalpellegrinoseoane}).

\begin{lemma} \label{dense-lineability}
Assume that $Z$ is a metrizable, separable, topological vector space and that $\gamma$ is an infinite cardinal number.
Suppose that $A$ and $B$ are subsets of $Z$ such
that $A + B\subset A$, $A \cap B = \varnothing$, $B$ is dense-lineable and $A$ is $\gamma$-lineable.
Then $A$ is $\gamma$-dense-lineable.
\end{lemma}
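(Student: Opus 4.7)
Fix a translation-invariant metric $d$ generating the topology of $Z$, and let $M_A\subset Z$ be a vector subspace of dimension $\gamma$ with $M_A\setminus\{0\}\subset A$, and $\widetilde{M}_B\subset Z$ a dense vector subspace with $\widetilde{M}_B\setminus\{0\}\subset B$. The plan is to produce a dense subspace $M$ of dimension $\gamma$ by gluing a countable dense piece of $\widetilde{M}_B$ onto $M_A$ in such a way that no non-zero vector of $M$ falls inside $\widetilde{M}_B$. A preliminary remark is that $M_A\cap\widetilde{M}_B=\{0\}$, since any non-zero common vector would lie in $A\cap B=\varnothing$.

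First, using separability of $Z$, I would extract inside $\widetilde{M}_B$ a linearly independent sequence $(b_n)_n$ whose range is dense in $Z$: fix a countable dense sequence $(z_n)_n$ in $Z$ and inductively choose $b_n\in\widetilde{M}_B\setminus\operatorname{span}\{b_1,\dots,b_{n-1}\}$ with $d(b_n,z_n)<1/n$, which is possible because removing a closed finite-dimensional subspace from a dense set still leaves a dense set. Next, fix a Hamel basis $\{e_\alpha:\alpha\in\Lambda\}$ of $M_A$ with $|\Lambda|=\gamma$, single out a countable subfamily and relabel it $\{e_n\}_{n\in\N}$, and use continuity of scalar multiplication to pick non-zero scalars $\lambda_n$ with $a_n:=\lambda_n e_n\to 0$ in $Z$. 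Replacing each $e_n$ by $a_n$ still gives a Hamel basis $\{a_n\}_{n\in\N}\cup\{e_\alpha\}_{\alpha\in\Lambda\setminus\N}$ of $M_A$.

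Finally define
$$M:=\operatorname{span}\bigl(\{a_n+b_n:n\in\N\}\cup\{e_\alpha:\alpha\in\Lambda\setminus\N\}\bigr).$$
Three points then need checking. For the dimension, a finite relation $\sum\mu_n(a_n+b_n)+\sum\mu_\alpha e_\alpha=0$ rearranges into $(\sum\mu_n a_n+\sum\mu_\alpha e_\alpha)+\sum\mu_n b_n=0$, with the first summand in $M_A$ and the second in $\widetilde{M}_B$; by the trivial intersection both must vanish, and the bases then force all $\mu=0$, so $\dim M=\gamma$. For $M\setminus\{0\}\subset A$, write any $x\in M$ as $x=a+b$ via the same decomposition with $a\in M_A$ and $b\in\widetilde{M}_B$; the same independence argument shows that $x\neq 0$ forces $a\neq 0$, hence $a\in A$, while $b$ is either $0$ (giving $x=a\in A$) or lies in $B$ (giving $x=a+b\in A+B\subset A$). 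For density, given $z\in Z$ and $\varepsilon>0$, use that a non-trivial metric TVS has no isolated points, so the dense set $\{b_n:n\in\N\}$ meets every $d$-ball of centre $z$ in infinitely many points; choose $n$ large enough that simultaneously $d(b_n,z)<\varepsilon/2$ and $d(a_n,0)<\varepsilon/2$, so that $a_n+b_n\in M$ lies within $\varepsilon$ of $z$.

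The main obstacle is that the naive choice $M:=M_A+\widetilde{M}_B$ fails outright: any non-zero $b\in\widetilde{M}_B$ would sit in $M$ but in $B$, hence outside $A$. The construction above is engineered to prevent pure $\widetilde{M}_B$-vectors from surviving in $M$, which is why one twists a countable part of the $M_A$-basis by the dense sequence $(b_n)_n$; the freedom to choose the scalars $\lambda_n$ so that $a_n\to 0$ is precisely what then recovers density of $M$ despite this twist.
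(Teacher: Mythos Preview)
The paper does not supply its own proof of this lemma; it is quoted as a known result from \cite[Theorem 2.3]{bernalordonez2014} (with further references to \cite{topology} and \cite{aronbernalpellegrinoseoane}), so there is no in-paper argument to compare against. Your proof is correct and is in fact essentially the standard argument used in those references: perturb a countable part of a Hamel basis of the $\gamma$-dimensional subspace inside $A$ by a dense sequence coming from $B$, after rescaling so that the $M_A$-part tends to $0$.

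A couple of minor remarks. First, the linear independence of the sequence $(b_n)$ is harmless but not actually needed: in your relation $(\sum\mu_n a_n+\sum\mu_\alpha e_\alpha)+\sum\mu_n b_n=0$, the fact that $M_A\cap\widetilde{M}_B=\{0\}$ already forces the $M_A$-part to vanish, and since $\{a_n\}\cup\{e_\alpha\}$ is a Hamel basis of $M_A$ this kills all the coefficients at once---so you could have taken $(b_n)$ to be any countable dense subset of $\widetilde{M}_B$. Second, it may be worth saying explicitly that $Z$ is infinite-dimensional (it contains $M_A$), which is what guarantees both that finite-dimensional subspaces have empty interior (used in your extraction of $(b_n)$) and that $Z$ has no isolated points (used in the density step). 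With those small clarifications your write-up would be self-contained.
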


\begin{theorem}\label{maxdenslin2}
The family $NUP([0,1])$ is maximal dense-lineable.
\end{theorem}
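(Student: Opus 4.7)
The plan is to invoke Lemma~\ref{dense-lineability} with $Z=L_0^\N$ and $\gamma=\mathfrak{c}$. First note that $L_0^\N$ is a separable complete metrizable topological vector space of cardinality $\mathfrak{c}^{\aleph_0}=\mathfrak{c}$, so $\dim(L_0^\N)=\mathfrak{c}$ and maximal dense-lineability coincides with $\mathfrak{c}$-dense-lineability. The $\mathfrak{c}$-lineability of $A:=NUP([0,1])$ is already provided by Theorem~\ref{PointUnifAlg}, since a strongly $\mathfrak{c}$-algebrable family contains the $\mathfrak{c}$-dimensional subspace spanned by its $\mathfrak{c}$ free generators. It thus suffices to exhibit a set $B\subset L_0^\N$ that is dense-lineable, disjoint from $A$, and satisfies $A+B\subset A$.

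I would take $B$ to be the vector subspace of all sequences $(g_n)_n\in L_0^\N$ with $\esssup_{x\in[0,1]}|g_n(x)|\to 0$, i.e.\ those converging to zero uniformly a.e.\ in $[0,1]$. Disjointness $A\cap B=\varnothing$ is immediate from \eqref{NUP:C} in Definition~\ref{defNUP}. For $A+B\subset A$, take $(f_n)_n\in A$ and $(g_n)_n\in B$. Property~\eqref{NUP:A} passes to sums because pointwise a.e.\ convergence is additive; property~\eqref{NUP:B} follows by fixing, for each $\varepsilon>0$, the exceptional set $E$ furnished by \eqref{NUP:B} for $(f_n)_n$ and using
\[
\esssup_{[0,1]\setminus E}|f_n+g_n| \le \esssup_{[0,1]\setminus E}|f_n| + \esssup_{[0,1]}|g_n| \to 0,
\]
which is valid because the uniform-a.e.\ control on $(g_n)_n$ requires no exclusion set; property~\eqref{NUP:C} for the sum is inherited from \eqref{NUP:C} for $(f_n)_n$, since otherwise $(f_n)_n=(f_n+g_n)_n-(g_n)_n$ would itself converge uniformly a.e.

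To see that $B$ is dense-lineable, let $B_0$ be the subspace of sequences $(g_n)_n\in L_0^\N$ that are eventually the zero class, i.e.\ $g_n=0$ in $L_0$ for all $n$ large enough. Any such sequence satisfies $\esssup_{[0,1]}|g_n|=0$ for $n$ large, so $B_0\setminus\{0\}\subset B$. Density of $B_0$ in $L_0^\N$ is simple: given $(f_n)_n\in L_0^\N$ and $\varepsilon>0$, choose $N\in\N$ with $\sum_{n>N}2^{-n}<\varepsilon$, set $g_n:=f_n$ for $n\le N$ and $g_n:=0$ for $n>N$; then $(g_n)_n\in B_0$ and $D((f_n)_n,(g_n)_n)<\varepsilon$. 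Lemma~\ref{dense-lineability} now yields $\mathfrak{c}$-dense-lineability of $A$, which is the sought maximal dense-lineability.

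The only mildly delicate step is the verification of \eqref{NUP:B} for $(f_n+g_n)_n$: a naive attempt might try to enlarge the exceptional set to handle both sequences at once, but since the contribution of $(g_n)_n$ is already essentially uniformly small over all of $[0,1]$, the exceptional set coming from $(f_n)_n$ alone suffices, and this is what makes the choice of $B$ (uniform a.e.\ convergence, rather than something weaker such as a.e.\ convergence) the right one.
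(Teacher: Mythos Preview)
Your proof is correct and follows essentially the same route as the paper: apply Lemma~\ref{dense-lineability} with $Z=L_0^\N$, $\gamma=\mathfrak{c}$, the $\mathfrak{c}$-lineability of $A$ coming from Theorem~\ref{PointUnifAlg}, and density furnished by the eventually null sequences $L_{00}$. The only cosmetic difference is that the paper takes $B=L_{00}$ directly, whereas you take the larger $B$ of all uniformly-a.e.\ null sequences and then invoke $L_{00}$ to witness its dense-lineability; this costs you a slightly longer verification of $A+B\subset A$ (which the paper leaves implicit), but the substance is identical.
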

\begin{proof}Consider the set $L_{00}$ of eventually null sequences of functions in $L_0$, that is,
$$L_{00} := \{ (f_n)_n \in L_0^\N : \ \text{there exists } N \in \N  \text{ such that } f_n = 0 \text{ for } n \geq N\}.$$
As $L_0^\N $ is endowed with the product topology it is obvious that $L_{00} $ is dense vector subspace of $L_0^\N$, hence dense-lineable.

Let $\mathcal{A}$ be the family of the hypothesis. We know that $\mathcal{A}$ is $\mathfrak{c}$-lineable. Moreover, $\mathcal{A}\cap L_{00}=\varnothing$, because every member of $L_{00}$ is convergent uniformly, and $\mathcal{A}+L_{00}\subset A$.

Now, an application of Lemma \ref{dense-lineability} with $Z=L_0^\N$ (recall that $\dim(L_0^\N)=\mathfrak{c}$), $A=\mathcal{A}$, $B=L_{00}$ and $\gamma =\mathfrak{c}$, finishes the proof.
\end{proof}

\section{Uniform vs $\|\cdot\|_{L^1}$-norm convergence}

In this final section we focus on sequences of measurable functions uniformly convergent to 0 but not in $L^1$-norm. Unlike what happened in the previous sections, this phenomenon cannot happen in a finite measure setting. So we will work with measurable functions defined in $[0,+\infty)$.

For every $n\in\N$, let $R_n:[0,+\infty)\to\R$ given by
\begin{equation}\label{ejemplo3}
R_n := \frac{1}{n} \chi_{[0,n]}.\end{equation}
The sequence $(R_n)_n$ is a classical example of sequence of functions converging uniformly to $0$ but not in $L^1$-norm and is the germ of the proof of all results in this section.

From now on, let $Z_0:=\left(L_0[0,+\infty)\right)^\N$. By $\|f\|_1$ we mean the $L^1$-norm of a measurable function $f:[0,+\infty)\to\R$, that is, $$\|f\|_1=\int_0^{+\infty}|f(x)|dx.$$

\begin{theorem}\label{algebrability3}
The family of sequences $(f_n)_n \in Z_0$ such that $(f_n)_n$ is uniformly convergent to 0 but not in $L^1$-norm, is strongly $\mathfrak{c}$-algebrable.
\end{theorem}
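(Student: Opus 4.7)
The plan is to adapt the exponential-trick constructions of Theorems \ref{Teo:1} and \ref{PointUnifAlg} to the $L^1$-setting. The base model here is the sequence $R_n$ from \eqref{ejemplo3}, but because $R_n$ is not an indicator function (its powers $R_n^k$ shrink like $n^{-k}$), the indicator-power-preservation trick used in the earlier proofs is unavailable, and the rescaling inside the exponential must grow super-polynomially in $n$. Fix a $\Q$-linearly independent set $H\subset(0,+\infty)$ with $\mathrm{card}(H)=\mathfrak{c}$, and for each $c\in H$ define the sequence $F(c)=(F(c,n))_n$ by
\[F(c,n)(x):=\frac{1}{n}\,e^{-cx/e^{n}},\qquad x\in[0,+\infty).\]
Let $\mathcal{B}$ be the algebra generated by $\{F(c):c\in H\}$. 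Any nonzero member $(F_n)_n$ of $\mathcal{B}$ is, for some $N\in\N$, mutually distinct $c_1,\dots,c_N\in H$, a nonempty finite set $J\subset\N_0^N\setminus\{\mathbf{0}\}$ and nonzero scalars $\alpha_{\mathbf{j}}$, of the form
\[F_n(x)=\sum_{\mathbf{j}\in J}\alpha_{\mathbf{j}}\,\frac{e^{-(\mathbf{c}\cdot\mathbf{j})x/e^{n}}}{n^{|\mathbf{j}|}}.\]

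Freeness of $\mathcal{B}$ follows exactly as in Theorem \ref{Teo:1}: the $\Q$-linear independence of $H$ makes the exponents $\{\mathbf{c}\cdot\mathbf{j}:\mathbf{j}\in J\}$ pairwise distinct, so $\{e^{-(\mathbf{c}\cdot\mathbf{j})x/e^{n}}\}_{\mathbf{j}}$ is a linearly independent family in $x$ for each $n$, forcing $F_n\not\equiv 0$ as soon as the polynomial is nonzero. Uniform convergence to $0$ is immediate since $e^{-cx/e^{n}}\leq 1$ on $[0,+\infty)$ and $|\mathbf{j}|\geq 1$:
\[\|F_n\|_\infty\leq \frac{1}{n}\sum_{\mathbf{j}\in J}|\alpha_{\mathbf{j}}|\longrightarrow 0.\]

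For the $L^1$-norm, the substitution $y=x/e^{n}$ and factoring out $n^{-m}$ with $m:=\min\{|\mathbf{j}|:\mathbf{j}\in J\}$ give
\[\|F_n\|_1=\frac{e^{n}}{n^m}\int_0^{\infty}\Bigl|\sum_{\mathbf{j}\in J}\alpha_{\mathbf{j}}\,n^{\,m-|\mathbf{j}|}\,e^{-(\mathbf{c}\cdot\mathbf{j})y}\Bigr|\,dy.\]
Dominated convergence (with integrable dominant $\sum_{\mathbf{j}}|\alpha_{\mathbf{j}}|e^{-(\mathbf{c}\cdot\mathbf{j})y}\in L^1([0,+\infty))$) makes the integral tend to $\int_0^\infty|\sum_{|\mathbf{j}|=m}\alpha_{\mathbf{j}}e^{-(\mathbf{c}\cdot\mathbf{j})y}|\,dy$, which is strictly positive by the same linear-independence argument applied to the subfamily $J_m=\{\mathbf{j}\in J:|\mathbf{j}|=m\}$. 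Since $e^{n}/n^m\to+\infty$ for every fixed $m$, we conclude $\|F_n\|_1\to+\infty$; in particular $(F_n)_n$ does not converge to $0$ in $L^1$-norm.

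The main obstacle — and the reason for the unusual scale $e^{n}$ inside the exponential — is that strong algebrability requires $\|F_n\|_1\not\to 0$ on every nonzero polynomial without constant term, including purely higher-degree ones such as $P(x_1,x_2)=x_1 x_2$ or $P(x_1)=x_1^2$. A naive analogue like $F(c,n)(x)=e^{-cx/n}\chi_{[0,n]}(x)/n$, directly modelled on $R_n$ and on the exponential tricks of Theorems \ref{Teo:1} and \ref{PointUnifAlg}, does satisfy $\|F(c,n)\|_1\not\to 0$, but it yields $\|F(c_1,n)^{j_1}\cdots F(c_N,n)^{j_N}\|_1=O(n^{1-|\mathbf{j}|})\to 0$ as soon as $|\mathbf{j}|\geq 2$, which would destroy the argument on those polynomials. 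Replacing the polynomial scale $n$ by any $a_n$ with $a_n/n^k\to+\infty$ for every $k\in\N$ (here $a_n=e^{n}$) keeps enough $L^1$-mass in each power-product, at the mild cost that $\|F_n\|_1$ tends to $+\infty$ rather than to a positive constant.
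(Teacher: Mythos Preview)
Your proof is correct, but the construction differs from the paper's. The paper takes
\[
F(c,n)=\frac{1}{n^{c}}\,\chi_{[0,e^n]},
\]
placing the parameter $c$ in the exponent of the \emph{scalar} factor rather than inside an exponential in $x$. Because $\chi_{[0,e^n]}$ is an indicator, products collapse exactly as in Theorems~\ref{Teo:1} and~\ref{PointUnifAlg}: $F(c_1,n)^{j_1}\cdots F(c_N,n)^{j_N}=n^{-(\mathbf{c}\cdot\mathbf{j})}\chi_{[0,e^n]}$, so a general element of the algebra is $\bigl(\sum_{\mathbf{j}\in J}\alpha_{\mathbf{j}}n^{-\mathbf{c}\cdot\mathbf{j}}\bigr)\chi_{[0,e^n]}$, and uniform convergence together with $\|F_n\|_1\to+\infty$ follows from the elementary asymptotics of $e^n/n^{\mathbf{c}\cdot\mathbf{j}}$. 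Your opening claim that ``the indicator-power-preservation trick used in the earlier proofs is unavailable'' is therefore not quite right: the trick \emph{is} available once $c$ is moved from the $x$-variable into the power of $n$. Your route --- putting $c$ in a decaying exponential $e^{-cx/e^n}$ and recovering the $L^1$-blow-up via a substitution and dominated convergence --- is a legitimate and self-contained alternative; it buys you smooth (indeed $C^\infty$) generators at the price of a slightly longer $L^1$ estimate, whereas the paper's choice keeps the whole computation at the level of one-line scalar asymptotics.
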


\begin{proof}Let $H\subset(0,+\infty)$ be a $\Q$-linearly independent set with $\dim(H)=\mathfrak{c}$. For every $c\in H$, let $F(c):=(F(c,n))_n\in Z_0$ be the sequence given by
\begin{equation}\label{defini3}
F(c,n):=\frac{1}{n^c}\cdot\chi_{[0,e^n]}.
\end{equation}
  It is clear that $|F(c,n)(x)|\le \frac{1}{n^c}\to0$. In addition, $\|F(c,n)\|_1=\frac{e^n}{n^c}\to+\infty$ for every $c\in H$. So for any $c\in H$, the sequence $F(c)$ is uniformly convergent to 0 in $[0,+\infty)$ but not in $L^1$-norm.

  Let $\mathcal{B}$ be the algebra generated by the family $\{F(c):\, c\in H\}$, which is clearly a linearly independent family. We claim that $\mathcal{B}$ is a free algebra such that any non-zero member is a sequence uniformly convergent to 0 in $[0,+\infty)$ but not in $L^1$-norm.

  Let $F=(F_n)_n\in\mathcal{B}\setminus\{0\}$. Similarly to the proof of Theorems \ref{Teo:1} and \ref{PointUnifAlg}, there exist a natural number $N\in\N$, a finite set $J\subset\N^N\setminus\{(0,\overset{(N)}{\ldots},0)\}$ and scalars $\alpha_{\bf j}\in\R\setminus\{0\}$ for any ${\bf j}\in J$ such that, for every $n\in\N$, we have \begin{equation}\label{eq:3}F_n=\left(\sum_{{\bf j}\in J}\alpha_{\bf j} \frac{1}{n^{{\bf c}\cdot{\bf j}}}\right)\chi_{[0,e^n)}.\end{equation}

  The $\Q$-linearly independence of $H$ and \eqref{eq:3}, guarantee that, for any $n\in\N$, the real number $\sum_{{\bf j}\in J}\alpha_{\bf j} \frac{1}{n^{{\bf c}\cdot{\bf j}}}$ can never be zero and, in addition, $\frac{1}{n^{{\bf c}\cdot{\bf j}}}\to0$. So, the algebra $\mathcal{B}$ is free, the sequence $F(c)=(F(c,n))_n$ is uniformly convergent to 0 in $[0,+\infty)$ and $$\|F(c,n)\|_1=\left|\sum_{{\bf j\in J}}\alpha_{\bf j}\frac{1}{n^{{\bf c}\cdot{\bf j}}}\right|e^n\to+\infty,\qquad (n\to\infty)$$ which concludes the proof.
\end{proof}

Although every non-zero sequence of the algebra $\mathcal{B}$ of the previous proof is not convergent (to zero) in $L^1$-norm, all the functions are, in fact, integrable in $[0,+\infty)$. Hence $\mathcal{B}\setminus\{0\}\subset Z_1:=\left(L^1[0,+\infty)\right)^\N$, where $L^1[0,+\infty)$ denotes the set of (classes of) integrable functions in $[0,+\infty)$. This allows us to endow $Z_1$ with the product topology inherited by the $\L^1$ norm, thus making $Z_1$ a separable, metrizable, topological vector space.

As stated in Section 1, Bernal and Ord\'o\~nez \cite[\S 4.7]{bernalordonez2014} considered the space $CBL_s$ of sequences of continuous, bounded and integrable functions $f_n:\R\to\R$ such that $\|f_n\|_\infty\to0$ and $\sup_n\|f_n\|_1+\infty$ (which becomes a Banach space with the norm $\|(f_n)_n\|:=\sup_n\|f_n\|_\infty+\sup_n\|f_n\|_1$). They proved that the family $\mathcal{F}:=\{(f_n)_n\in CBL_s:\, \|f_n\|_1\not\to0 \text{ as }n\to\infty\}$ is spaceable in $CBL_s$. It turns out that the family $\mathcal{F}$ is smaller than the family of Theorem \ref{algebrability3}, but the topology of $CBL_s$ is finer than those of $Z_1$. Thus, it cannot be directly derived from \cite{bernalordonez2014}, the spaceability of the family of functions of Theorem \ref{algebrability3}. Moreover, in \cite{bernalordonez2014} it is also showed that the Banach space $CBL_s$ is not separable, so they were not able to state the maximal dense-lineability. In this setting, taking into account that the product topology makes $Z_1$ separable, we are able to prove the next result.
%
%

\begin{theorem}\label{maxdenslin3}
The family of sequences $(f_n)_n \in Z_1$ such that $(f_n)_n$ is uniformly convergent to 0 but not in $L^1$-norm, is maximal dense-lineable and spaceable in $Z_1$.
\end{theorem}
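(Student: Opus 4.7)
For maximal dense-lineability, I would follow the template of Theorem~\ref{maxdenslin2}. The space $Z_1=(L^1[0,+\infty))^\N$, endowed with the product topology inherited from the $L^1$-norm, is a separable Fr\'echet space of dimension $\mathfrak{c}$. Let $L_{00}^1\subset Z_1$ denote the vector subspace of eventually-null sequences; this is dense in $Z_1$ (truncations approximate any sequence in the product topology) and every element of $L_{00}^1$ converges trivially both uniformly and in $L^1$-norm to~$0$. Writing $\mathcal{A}$ for the family in the statement, Theorem~\ref{algebrability3} already gives that $\mathcal{A}$ is $\mathfrak{c}$-lineable; clearly $\mathcal{A}\cap L_{00}^1=\varnothing$, and $\mathcal{A}+L_{00}^1\subset\mathcal{A}$ because adding an eventually-zero sequence does not affect the tail behavior of either convergence property. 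Lemma~\ref{dense-lineability} applied with $Z=Z_1$, $A=\mathcal{A}$, $B=L_{00}^1$, $\gamma=\mathfrak{c}$ then yields $\mathfrak{c}$-dense-lineability, which is maximal since $\dim(Z_1)=\mathfrak{c}$.

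For spaceability, I would construct an explicit infinite-dimensional closed subspace of $Z_1$ contained in $\mathcal{A}\cup\{0\}$, rather than invoke the Kitson--Timoney theorem of Remark~\ref{Remark:KandT}. The idea is to exploit disjoint supports, as in Theorems~\ref{spaceability1} and~\ref{spaceability2}. Pick pairwise disjoint intervals $I_{k,n}\subset[0,+\infty)$ with $|I_{k,n}|=n$, one for each pair $(k,n)\in\N\times\N$ (for instance, enumerate the pairs and place the intervals consecutively on the half-line), and set
\[
R(k,n):=\frac{1}{n\cdot 2^k}\chi_{I_{k,n}},\qquad R(k):=(R(k,n))_n.
\]
Then $\|R(k,n)\|_\infty=1/(n\,2^k)\to 0$ while $\|R(k,n)\|_1=1/2^k\not\to 0$, so every $R(k)$ already lies in $\mathcal{A}$. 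Put $M:=\overline{\mathrm{span}}\{R(k):k\in\N\}\subset Z_1$. Thanks to the pairwise disjointness of all the intervals $I_{k,n}$, I expect $M$ to consist exactly of the series $F=\sum_k\lambda_k R(k)$ with $\sum_k|\lambda_k|/2^k<\infty$: at every coordinate $n$ one has $\|F_n\|_1=\sum_k|\lambda_k|/2^k$ (independent of~$n$), so finiteness is automatic for any $F\in M\subset Z_1$, and conversely this summability makes the partial sums Cauchy in~$Z_1$. For any such $F\ne 0$ one then reads off $\|F_n\|_\infty=\sup_k|\lambda_k|/(n\,2^k)\le\frac{1}{n}\sum_k|\lambda_k|/2^k\to 0$ and $\|F_n\|_1=\sum_k|\lambda_k|/2^k>0$ is constant in $n$, so $F\in\mathcal{A}$; linear independence of $\{R(k)\}_k$, immediate from support-disjointness, makes $M$ infinite-dimensional.

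The delicate step is the precise identification of $M$. The forward inclusion reduces to the Cauchy-tail estimate $\|F_n^{(K)}-F_n^{(K')}\|_1=\sum_{K<k\le K'}|\lambda_k|/2^k$, which is independent of~$n$. The reverse inclusion requires observing that any $Z_1$-limit of finite combinations is, on each $I_{k,n}$, a constant; coordinate-wise $L^1$-convergence then forces the scalar sequences $(\lambda_k^{(m)})_m$ to converge to limits $\lambda_k$ that are summable against $2^{-k}$. Once $M$ is identified in this way, the inclusion $M\setminus\{0\}\subset\mathcal{A}$ boils down to the one-line estimate $\|F_n\|_\infty\le\|F_n\|_1/n$, immediate from the disjoint-support structure.
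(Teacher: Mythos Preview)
Your argument for maximal dense-lineability is identical to the paper's: both invoke Theorem~\ref{algebrability3} for $\mathfrak{c}$-lineability and then apply Lemma~\ref{dense-lineability} with the eventually-null sequences as the dense subspace~$B$.

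For spaceability the two proofs diverge. The paper does \emph{not} identify a closed subspace explicitly; instead it builds a linearly independent family $G(k)=(\tfrac{1}{n}\chi_{I_{k+n-1,n}})_n$ with pairwise disjoint supports and $\|G(k,n)\|_1\equiv 1$, sets $X=\{(f_n)_n\in Z_1: f_n\to 0\ \text{uniformly}\}$ and $Y=\{(f_n)_n\in X:\|f_n\|_1\to 0\}$, observes that $\{G(k)\}\subset X\setminus Y$ forces $Y$ to have infinite codimension, and then invokes the Kitson--Timoney criterion (Remark~\ref{Remark:KandT}) to conclude. Your route is fully constructive: the extra $2^{-k}$ damping in $R(k,n)=\tfrac{1}{n2^k}\chi_{I_{k,n}}$ makes the closed linear span in the product-$L^1$ topology isomorphic to a weighted $\ell^1$, and the disjoint-support structure then lets you read off both $\|F_n\|_1=\sum_k|\lambda_k|2^{-k}$ (constant in~$n$) and $\|F_n\|_\infty\le \|F_n\|_1/n$ directly. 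What the paper's approach buys is brevity once the black box is quoted; what yours buys is an explicit description of every element of the witnessing subspace, and it sidesteps having to verify that the ambient space $X$ is genuinely Fr\'echet in the product-$L^1$ topology (a point the paper asserts but does not justify). Your identification of $M$ is correct: coordinate-wise $L^1$-Cauchyness of approximants translates, via support disjointness, into Cauchyness of $(\lambda^{(m)}_k)_k$ in the weighted $\ell^1$-norm $\sum_k|\cdot|2^{-k}$, which is complete.
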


\begin{proof}
Both from Bernal and Ord\'o\~nez result or directly from Theorem \ref{algebrability3} (at this moment we does not care about the topology), we know that the family $\mathcal{A}$ of the hypothesis is $\mathfrak{c}$-lineable. Additionally, $Z_1$ is a separable complete topological vector space (so its dimension is $\mathfrak{c}$) and we can use Lemma \ref{dense-lineability} with $Z=Z_1$, $A=\mathcal{A}$ and $B=L^1_{00}$ the set of eventually null sequences with nonzero components in $L^1[0,+\infty)$. Observe that, similarly to $L_{00}$ in the proof of Theorem \ref{maxdenslin2}, this set is a dense vector space of $Z_1$ and has empty intersection with $\mathcal{A}$, because every sequence in $L^1_{00}$ is convergent to zero in $L^1$-norm.

It remains to prove the spaceability. For this purpose, let us divide the interval $[0,+\infty)$ into infinitely many sequences of pairwise disjoint intervals (except, possibly, for the extremes). For every $N\in\N$ and every $M=1,\ldots,N$, let
$$I_{N,M}:=\left[\sum_{j=1}^{N-1}j(N-j)+\frac{M(M-1)}{2}, \sum_{j=1}^{N-1}j(N-j)+\frac{M(M+1)}{2}\right].$$
Observe that for each $M\in\N$, the interval $I_{N,M}$ has always length $M$.

For every $k\in\N$, define the sequence $G(k)=(G(k,n))_n:=\left(\frac{1}{n}\chi_{I_{k+n-1,n}}\right)_n$. It is straightforward that every sequence $G(k)$ converges to zero uniformly in $[0,+\infty)$ but not in $L^1$-norm (observe that $\|G(k,n)\|_1=1$ for all $n\in\N$). Moreover, the family $\{G(k):\,k\in\N\}$ is linearly independent because of the disjointness of the (interiors of the) supports of all functions included in it.

Let $X:=\{(f_n)_n\in Z_1:\, f_n\to0\, \text{uniformly in } [0,+\infty)\}$ (which becomes a Fréchet space when endowed with the product topology inherited by the $L^1$ norm) and $Y:=\{(f_n)_n\in X:\, \|f_n\|_1\to0\}$, which is trivially a closed subspace of $X$. By construction, we have that $\{G(k):\, k\in\N\}\subset X\setminus Y$, whence $Y$ has infinite codimension. Now a direct application of Kitson and Timoney criterion for spaceability (see Remark \ref{Remark:KandT}), gives us the spaceability of $\mathcal{A}$ in $X$, and hence in $Z_1$.
%
\end{proof}

\begin{remark} All the sequences of Theorems \ref{algebrability3} are not convergent (to zero) in $L^1$-norm. In fact, by construction, given any sequence $(f_n)_n$ from the algebra of Theorem \ref{algebrability3},
it holds that $\|f_n\|_1\to+\infty$ ($n\to\infty$). However, every sequence of the closed vector space given by the result of Bernal and Ord\'o\~nez is uniformly bounded in $L^1$-norm. Thus, it is natural to ask about the algebraic genericity of the family of sequences of functions $(f_n)_n\in Z_1$ such that $\sup_n\|f_n\|_1<+\infty$, $f_n\to0$ uniformly in $[0,+\infty)$ but not in $L^1$-norm. Similarly to \cite[Theorem 4.16]{bernalordonez2014}, it can be showed the spaceability of this family in $Z_1$. Moreover, a direct application of Lemma \ref{dense-lineability} with $\gamma=\mathfrak{c}$ and $B=L^1_{00}$, allows us to prove the maximal-dense-lineability.
\end{remark}

In view of the last remark, we want to finish this paper by posing the next open problem.

\begin{problem}
Consider the family of sequences of functions $(f_n)_n\in (L^1[0,+\infty))^\N$ such that $\sup_n\|f_n\|_1<+\infty$, $(f_n)_n$ is uniformly convergent to 0 but not in $L^1$-norm. Is this family strongly $\mathfrak{c}$-algebrable?
\end{problem}

\bigskip

\bigskip

{\scriptsize
\begin{flushleft}
M.C.~Calder\'on-Moreno, P.J.~Gerlach-Mena and J.A.~Prado-Bassas\\
Departamento de An\'alisis Matem\'atico.  \\
Facultad de Matem\'aticas, Universidad de Sevilla. \\
Avda.~Reina Mercedes s/n, 41012 Sevilla, Spain.  \\
E-mails: {\tt mccm@us.es}, {\tt gerlach@us.es} and {\tt bassas@us.es}
\end{flushleft}}

\begin{thebibliography}{99}

\bibitem{ABMPS-studia}
{G.~Ara\'ujo, L.~Bernal-Gonz\'alez , G.A.~Mu\~noz-Fern\'andez, J.A.~Prado-Bassas and J.B.~Seoane-Sep\'ulveda},
{Lineability in sequence and function spaces},
Studia Math. {\bf 237} (2017), 119--136.

\bibitem{aronbernalpellegrinoseoane}
{R.M.~Aron}, {L.~Bernal-Gonz\'alez}, {D.~Pellegrino} {and J.B.~Seoane-Sep\'ulveda},
{Lineability: The search for linearity in Mathematics, Monographs and Research Notes in Mathematics},
{Monographs and Research Notes in Mathematics}.
{Chapman \& Hall/CRC},
{Boca Raton, FL},
2016.

\bibitem{topology}
{R.M.~Aron}, {F.J.~Garc\'\i a-Pacheco}, {D.~P\'erez-Garc\'\i a}, {and J.B.~Seoane-Sep\'ulveda},
{On dense-lineability of sets of functions on $\Bbb R$},
{Topology} {\bf 48} {(2009)}, {149--156}.

\bibitem{AGS}
{R.M.~Aron, V.I.~Gurariy, and J.B.~Seoane-Sep\' ulveda},
{Lineability and spaceability of sets of functions on $\R$},
{Proc. Amer. Math. Soc.} \textbf{133} (2005), 795--803.

\bibitem{BaBoFaPe}
{C.S.~Barroso, G.~Botelho, V.V.~Favaro and D.~Pellegrino},
{Lineability and spaceability for the weak form of Peano's theorem and vector-valued sequence spaces},
{Proc. Amer. Math. Soc.} {\bf 141} (2013), 1913--1923.



\bibitem{bernalordonez2014}
{L.~Bernal-Gonz{\'a}lez}, {and M.~Ord{\'o}{\~n}ez Cabrera},
{Lineability criteria, with applications},
{J. Funct. Anal.}, {\bf 266} {(2014)}, {3997--4025}.


\bibitem{BAMS}
{L.~Bernal-Gonz{\'a}lez,}, {D.~Pellegrino}, {and J.B.~Seoane-Sep{\'u}lveda},
{Linear subsets of nonlinear sets in topological vector spaces},
{Bull. Amer. Math. Soc. (N.S.)},
{\bf 51} {(2014)}, {71--130}.

\bibitem{BoDiFaPe}
{G.~Botelho, D.~Diniz, V.V.~Favaro and D.~Pellegrino},
{Spaceability in Banach and quasi-Banach sequence spaces},
{Linear Algebra Appl.} {\bf 434} (2011), 1255--1260.

\bibitem{BoFa}
{G.~Botelho and V.V.~Favaro},
{Constructing Banach Spaces of Vector-ValuedSequences with Special Properties},
{Michigan Math. J.} {\bf 64} (2015), 539--554.

\bibitem{CGP_JMAA}
{M.C.~Calder\'on-Moreno, P.J.~Gerlach-Mena and J.A.~Prado-Bassas},
{Algebraic structure of continuous, unbounded and integrable functions},
{J. Math. Anal. Appl.}, {\bf 470} (2019), 348--359.

\bibitem{CoFeMuSe}
{J.A.~Conejero, M.~Fenoy, M.~Murillo-Arcila and J.B.~Seoane-Sep\'ulveda},
{Lineability within probability theory settings},
{Rev. R. Acad. Cienc. Exactas F\'{\i}s. Nat. Ser. A Mat. RACSAM} {\bf 111} (2017), 673--684.

\bibitem{Gurariy1}
{V.I.~Gurariy},
{Subspaces and bases in spaces of continuous functions (Russian)},
{Dokl. Akad. Nauk SSSR} \textbf{167} (1966), 971--973.

\bibitem{KitsonTimoney}
{D.~Kitson and R.M.~Timoney},
{Operator ranges and spaceability},
{J. Math. Anal. Appl.} {\bf 378} (2011), {680--686}.


\bibitem{26-studia}
{O.A.~Nielsen},
{An introduction to integration and measure theory},
{Canadian Mathematical Society Series of Monographs and Advanced Texts}.
{John Wiley \& Sons, Inc.},
{New York}, 1997.

\bibitem{NoPel}
{T.~Nogueira and D.~Pellegrino},
{On the size of certain subsets of invariant Banach sequence spaces},
{Linear Algebra Appl.} {\bf 487} (2015), 172--183.

\bibitem{Oxtoby}
{J.C.~Oxtoby},
{Measure and category, Second Edition},
{Graduate Text in Mathematics, 2}.
{Springer-Verlag},
{New York-Berlin}, 1980.
\end{thebibliography}
\end{document}